\documentclass[11pt]{amsart}
\usepackage{amscd}
\usepackage{graphics}
\usepackage{hyperref}

\newtheorem{thm}{Theorem}
\newtheorem{lem}[thm]{Lemma}
\newtheorem{prop}[thm]{Proposition}
\newtheorem{cor}[thm]{Corollary}
\newtheorem{defn}[thm]{Definition}

\theoremstyle{remark}

\newcommand{\an}{^{\mathrm{an}}}
\newcommand{\arxiv}[1]{\href{http://arxiv.org/abs/#1}{arXiv:#1}}
\newcommand{\Gm}{\mathbb G_m}

\newcommand{\init}{\operatorname{in}}
\newcommand{\isom}{\cong}
\newcommand{\m}{\mathfrak m}
\newcommand{\PP}{\mathbb P}
\newcommand{\p}{\mathfrak p}
\newcommand{\q}{\mathfrak q}
\newcommand{\RR}{\mathbb R}
\newcommand{\Spec}{\operatorname{Spec}}
\newcommand{\Trop}{\operatorname{Trop}}
\newcommand{\val}{\operatorname{val}}
\newcommand{\ZZ}{\mathbb Z}

\newif\ifdetails\detailsfalse

\title{The Gr\"obner stratification of a tropical variety}
\author{Dustin Cartwright}
\address{Department of Mathematics\\Yale University\\PO Box 208283\\
New Haven, CT 06520}
\begin{document}

\begin{abstract}
Each Gr\"obner stratum of a tropical variety is a connected set of points, all
of which induce the same initial subscheme.  The Gr\"obner stratification is a
coarsening of the decomposition into Gr\"obner polyhedra, and has the advantage
that it does not depend on a choice of compactification. We give an example of a
curve over a field with non-trivial valuation whose Gr\"obner stratification is
strictly finer than the coarsest polyhedral decomposition of the tropical
variety.  We also show that the Gr\"obner stratification of a
locally matroidal tropical variety is completely determined by the underlying
tropical variety.
\end{abstract}

\maketitle

Let $K$ be a field with valuation $\val \colon K^\times \rightarrow \RR$ and
let $\Gamma \subset \RR$ be the image of its valuation.
We assume and fix a splitting $r\colon \Gamma \rightarrow K^\times$ of the
valuation, which is guaranteed to exist if $K$ is algebraically
closed or if $\Gamma$ is discrete.  For any closed subscheme $V \subset \Gm^N$
and any point $w$ of $\RR^N$, it is possible to take its weighted initial scheme
$\init_w(V)$ (see Definition~\ref{def:init}). The tropicalization $\Trop(V)$ is
the set of all weights $w \in \RR^N$ such that $\init_w(V)$ is non-empty. There
exists a polyhedral complex whose support is $\Trop(V)$, but the polyhedral
structure is not unique, and there may not be a coarsest
polyhedral structure~\cite[Example~5.2]{st}.  In particular, the construction of
the polyhedral complex depends on the choice of a compactification $\Gm^N
\subset
\PP^N$ in which we take $\overline V$ to be the closure of~$V$. Then each
polyhedron of the Gr\"obner complex is the closure of the set of $w \in \RR^N$
such that $\init_w(\overline V)$ is some fixed scheme.

Without choosing a compactification, we can stratify the tropical variety based
on the initial ideal $\init_w(V)$, taken within the torus $\Gm^N$.  More
precisely, a stratum of the \emph{Gr\"obner stratification} is a component of
the set of points $w \in \RR^N$ such that $\init_w(V)$ is some fixed subscheme.
Unlike the Gr\"obner complex, each stratum need not be the relative interior of
a convex polyhedron, nor even contractible (for example, take two
tropical planes meeting in a single point), but it is at least an open subset of
an affine linear subspace of~$\RR^N$.

The purpose of this paper is to contrast the Gr\"obner stratification with
topological stratification, which depends solely on
$\Trop(V)$ as a subset of~$\RR^N$, and is defined to be
the finest stratification which is compatible with local products in the
tropical variety. More precisely, suppose that we have a linear isomorphism
$\RR^N \isom \RR^{N-m} \times \RR^m$ and an open set $U \subset \RR^N$ such that
$U \cap
\Trop(V)$ factors as a product of a subset of $\RR^{N-m}$ with an
open subset $W \subset \RR^m$.  The \emph{topological stratification} is defined to be
the finest
stratification such that for any local factoring of an open set $U$, each
stratum also factors as the product of a subset of $\RR^{N-m}$ with~$W$.  Unlike
the Gr\"obner stratification, the topological stratification depends only on
$\Trop(V) \subset \RR^N$ and not on~$V$ itself.  Without using this terminology,
Sturmfels and Tevelev gave a recipe for constructing a tropical variety whose
Gr\"obner stratification is strictly finer than its topological
stratification~\cite[Example~3.10]{st}. We give an
explicit such example as a curve over a field with non-trivial valuation in
Section~\ref{sec:example}, and
we examine the preimages of its Gr\"obner strata in the Berkovich
skeleton.

The Berkovich analytification $V^{\rm an}$ maps surjectively onto $\Trop(V)
\subset \RR^N$ by taking the valuations of the coordinate functions.  When $V$
is a curve, $V^{\rm an}$ contains a distinguished graph, the skeleton, coming
from a minimal semistable reduction of~$V$, and $\Trop(V)$ is the image of the
skeleton under a map which is linear on each edge.  Thus, there are finitely
many points in $\Trop(V)$ which are the images of the vertices of the skeleton.
Any point~$w$ whose neighborhood in $\Trop(V)$ is not a union of open
segments must be a zero-dimensional Gr\"obner stratum and also the image of a
vertex
of the skeleton.  Nonetheless, in general, the set of images of vertices of the
skeleton
neither contains nor is contained in the set of zero-dimensional Gr\"obner
strata.
Example 2.6 of~\cite{bpr} is a curve whose skeleton has a vertex which maps to a
point in the interior of a one-dimensional Gr\"obner stratum. On the other hand,
our example in Section~\ref{sec:example} has a zero-dimensional Gr\"obner
stratum contained in an open half-ray, whose preimage in the skeleton
is two disjoint edges.

Our detailed examination of this example occupies the entirety of
Section~\ref{sec:example}. In Section~\ref{sec:matroidal}, we compare the two
strata on general tropical varieties. Proposition~\ref{prop:top-charac} implies
that in any case
where the Gr\"obner refines the tropical stratification, the corresponding
initial ideals will have the same support, but different non-reduced structures.
We prove that the two strata agree for locally matroidal tropical varieties
and for curves whose multiplicities are all one (Theorem~\ref{thm:matroidal} and
Corollary~\ref{cor:curve-mult-one}).
Throughout the paper, $R$ will
denote the valuation ring of~$K$ with maximal ideal $\m \subset R$ and residue
field $k = R / \m$.  

\subsection*{Acknowledgments}
I have been supported by the National Science Foundation under award
DMS-1103856.  I would like to thank Sam Payne for many helpful suggestions,
including the question which prompted this paper.  Walter Gubler pointed me
towards Example 3.10 in \cite{st}. I also thank the Institut Mittag-Leffler and
the Max Planck Institute in Bonn for their hospitality during my work
on this paper.

\section{A non-trivial Gr\"obner stratification}\label{sec:example}

In this section, we give an example of a curve whose tropicalization has an
open ray, which contains 3 distinct Gr\"obner strata.  First, it will be useful to define the initial degeneration
$\init_w(V)$ in greater generality than has appeared in the literature. In
particular, the following does not require the entries of~$w$ to be in the value
group.
\begin{defn}\label{def:init}
For any $w \in \RR^N$, we let 
$R[x_1^\pm, \ldots,
x_N^\pm]^w$ denote the $R$-subalgebra of $K[x_1^\pm, \ldots, x_N^\pm]$ generated
by all terms $a x^u$ such that $\val(a) + u \cdot w \geq 0$, where $u \in \ZZ^N$
is the vector of exponents.
We define
$V^w$ to be the closure of $V$ in $\Spec R[x_1^\pm, \ldots, x_N^\pm]^w$. We then
define a ring homomorphism $\phi\colon R[x_1^\pm, \ldots, x_N^\pm] \rightarrow
k[x_1^\pm, \ldots, x_N^\pm]$ by sending the monomial $a x^u$ to
\begin{equation*}
\phi(a x^u) = \begin{cases}
\overline{a r(w \cdot u)} x^u & \mbox{if } \val(a) + r \cdot u = 0 \\
0 & \mbox{if } \val(a) + r \cdot u > 0,
\end{cases}\end{equation*}
where $\overline{a r(w \cdot u)}$ denotes the image of $a r (w \cdot u)$ in the residue field~$k$.  The
\emph{initial degeneration} $\init_w(V)$ is defined to be $(\phi^*)^{-1}(V^w)$,
where $\phi^*$ is the map of schemes induced by~$\phi$.
\end{defn}
If the entries of $w$ lie in the value group, then $\phi^*$ induces an
isomorphism from $\init_w(V)$ to the special fiber of $V^w$, and if not,
$\init_w(V)$ can be thought of as
the special fiber of $V^w$ after extending
$K$ to a field whose valuation group contains the coordinates of~$w$.

For our example, we let $\pi \in K$ be an element with non-trivial valuation,
and for convenience, we rescale the valuation such that $\val(\pi) = 1$.
Throughout this section, $I \subset K[x^\pm, y^\pm, z^\pm]$ will denote the
ideal generated by the $2 \times 2$ minors of the matrix
\begin{equation}\label{eq:def-i}
\begin{bmatrix}
x-1 & \pi(y-1) & \pi(y-1 - \pi z) \\
\pi (y-1) & \pi(y-1 - \pi z) & x-1-\pi
\end{bmatrix},
\end{equation}
and $V$ will be the subvariety of $\Gm^3$ defined by $I$.  Abstractly, $V$ is
isomorphic to $\PP^1$ with a finite number of points removed, and it can be
given explicitly as the image of the rational map
\begin{equation*}
(x,y,z) = \left(
\frac{\pi}{1-u^3} + 1,
\frac{u}{1-u^3} + 1,
\frac{u}{\pi(1+u+u^2)}
\right).
\end{equation*}
We will use $X$, $Y$, and~$Z$ as coordinates for $\RR^3$.  The intersection of
the tropical variety of~$V$ with the half space defined by $Z > -1$ is the
single half ray with $X = Y = 0$. The initial stratification divides this
half-ray into three strata, based on the ideal defining $\init_w(V)$:
\begin{equation}\label{eq:initial-ideals}
I\big(\init_{(0,0,Z)}(V)\big) = \begin{cases}
\langle (x-1), (y-1)^2 \rangle & \mbox{if } -1 < Z < 0 \\
\langle (x-1)^2, (x-1)z-(y-1) \rangle & \mbox{if } Z = 0 \\
\langle (x-1)^2, (y-1) \rangle & \mbox{if } 0 < Z.
\end{cases}
\end{equation}
Note that all three of these initial schemes have the same support, namely the
subtorus defined by $x = y = 1$, but different non-reduced structures.

\begin{table}
\begin{tabular}{l|c|c}
Component $D$ & $(\val_D(x), \val_D(y), \val_D(z))$ & $\Trop(D)$ \\ 
\hline
$u^3 - \pi - 1$ & $(1, 0, 0)$ & $(\infty, -1, *)$ \\
$u^3 - u - 1$ & $(0, 1, 0)$ & $(0, \infty, -1)$  \\
$u$ & $(0, 0, 1)$ & $(0, 0, \infty)$ \\
$u^2 + u + 1$ & $(1, 1, 1)$ & $(-\infty, -\infty, -\infty)$ \\
$u - 1$ & $(1, 1, 0)$ & $(-\infty, -\infty, -1)$ \\
$\infty$ & $(0, 0, 1)$ & $(0, 0, \infty)$
\end{tabular}
\caption{Boundary components of the compactification of~$V$, over an
algebraically closed field of characteristic not $3$ or~$23$. The value of~$*$
is $-1$ for the root of $u^3-\pi - 1$ such that $\val(u-1) = 1$ and $-2$ for the
other two roots, for which $\val(u-1) = 0$.}
\label{tbl:boundary}
\end{table}
We now compute the skeleton of $V\an$ and show that the map from this skeleton
to $\Trop(V)$ does not distinguish $(0,0,0)$ from nearby points.  We
will assume that $K$ is algebraically closed and that $k$ does not have
characteristic~$3$
or~$23$.  The purpose of these two assumptions is so that the boundary $\PP^1
\setminus V$ consists of exactly $11$ distinct closed points which are the roots
of the
polynomials shown in the first column of Table~\ref{tbl:boundary}.  In
characteristics $3$ and~$23$, there are extra coincidences among the divisors in
Table~\ref{tbl:boundary}, which change the skeleton, but not the part mapped to
the half-space $Z > -1$.  There is a unique $R$-model of $V$ defined by taking
$u$ to be
a non-constant rational function on the special fiber.
This model is not semistable because each root of $u^3 - \pi - 1$
intersects a corresponding root of $u^3 - 1$ in the special fiber. If we blow up
the special fiber at each of the three roots of $u^3 - 1$, we obtain a
semistable model whose special fiber has $4$ components. The original component
maps to $(0, 0, -1)$. The exceptional divisor of the blow-up at $u=1$
maps to $(0, -1, -1)$ and the other two exceptional divisors map
to $(0, -1, -2)$.  The dual graph of the special fiber is a claw graph and the
Berkovich skeleton consists of this graph together with $11$ infinite edges. The
map of the Berkovich skeleton to $\RR^3$ is shown in Figure~\ref{fig:example}.

\begin{figure}
\includegraphics{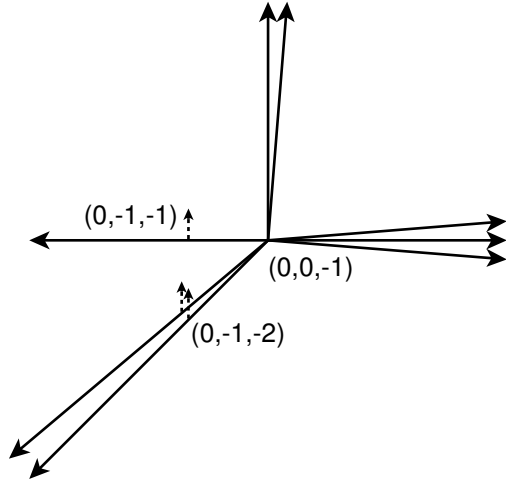}
\caption{Skeleton of the Berkovich analytification of $V$, shown using its
projection onto the $Y$ and $Z$ coordinates. 
Pairs and triples of lines which map to the same place
are drawn slightly separated from each other.
The short dotted lines represent line segments pointed into the positive $X$
direction. The other lines lie in the $X=0$ plane except for the two
southwest-most rays, which point in the $(-1, -1, -1)$ direction.}
\label{fig:example}
\end{figure}

The preimage of the half-plane $Z > -1$ in the skeleton consists of two disjoint
rays. Since $V\an$ has a deformation retract onto its skeleton, this implies
that $V\an$ itself decomposes into two connected components in the corresponding
analytic domain $D \subset \Gm^3$ defined by $\val(z) > -1$. We now compute this
decomposition explicitly.  Since $I$ is generated by all $2 \times 2$ minors of
(\ref{eq:def-i}), it also contains the determinant
\begin{multline*}
\frac{1}{\pi^2}\begin{vmatrix}
1 + \pi z - y & 0 & y - 1 \\
x-1 & \pi(y-1) & \pi(y-1 - \pi z) \\
\pi (y-1) & \pi(y-1 - \pi z) & x-1-\pi
\end{vmatrix}  \\
\ifdetails
= \frac{1}{\pi} (1 + \pi z - y) (y - 1) (x-1 - \pi)
- (1 + \pi z - y) (y-1 - \pi z) ^2 \\
+ \frac{1}{\pi} (y-1) (x-1) (y - 1 - \pi z)
- (y-1)^3 \\
\shoveright{= (y-1)(y-1-\pi z) - (y-1)^3 + (y-1 - \pi z)^3}  \\
\fi
= (1-3 \pi z)(y-1)^2 + \pi z(3 \pi z - 1) (y-1) - \pi^3 z^3
\end{multline*}
This is the equation of the projection of $V$ onto the $y$ and $z$ coordinates.
We now assume that $K$ does not have characteristic~$2$ in order to use the
quadratic formula to find analytic representations of the two
branches of $y$ in terms of $z$:
\begin{align}
\ifdetails
y
&= 1 + \frac{-\pi z(3 \pi z - 1) \pm \sqrt{\pi^2z^2(3 \pi z - 1)^2 + 4 \pi^3
z^3(1 - 3\pi z)}}{2(1-3\pi z)} \notag \\
&= 1 + \frac{\pi z}{2} \left( 1 \pm \sqrt{1 + \frac{4 \pi z}{1-3\pi z}}\right)
\notag \\
\fi
\ifdetails\else y \fi
&= 1 + \frac{\pi z}{2}\left(1 \pm \sqrt{\frac{1 - 3 \pi z}{1 + \pi z}}\right),
\label{eq:y}
\end{align}
which can be expanded as a power series in $z$, convergent on the analytic disk
defined by $\val(z) > -1$.
Using the left-most minor of (\ref{eq:def-i}), we
have
\begin{equation*}
x - 1 = \frac{\pi (y-1)^2}{y-1-\pi z},
\end{equation*}
and we can use the expansion of (\ref{eq:y}) as a power series to get analytic
formulas:
\begin{equation}\label{eq:x}\begin{split}
x &= 1 -\pi + 2 \pi^2z + \cdots \\
x &= 1 -\pi^4z^3 + \pi^6 z^5 + \cdots
\end{split}\end{equation}
for the positive and negative branch of $y$ respectively.  Thus, (\ref{eq:y})
and~(\ref{eq:x}) give equations for each branch of $V$ as the graph of an
analytic function from the $z$ coordinate to the $x$ and~$y$ coordinates.
Although we only defined $\init_w(V)$ for subschemes of the torus, we can also
take initial ideals in the analytic domain~$D$. For $X = Y=0$ and $Z > -1$, the
formulas (\ref{eq:y}) and~(\ref{eq:x}) give $x-1 = y-1=0$. Thus, either branch
on its own degenerates to the subtorus defined by $x=y=1$, and it is only when
the two branches degenerate together that the ``twisted'' non-reduced structure
appears in~(\ref{eq:initial-ideals}).

An important use of the polyhedral complex decomposition of a tropical variety
is the theory of tropical compactifications, as introduced by
Tevelev~\cite{tevelev} and later extended to fields with non-trivial valuation by
Qu~\cite{qu} (see also~\cite{lq}) and Gubler~\cite[Sec.~11]{gubler}. The topological
stratification of $\Trop(V)$ has polyhedral strata, and we can compactify $V$ by taking its closure inside
the corresponding toric scheme. This compactification cannot be a
tropical compactification since a tropical compactification must come from a
polyhedral complex which refines the Gr\"obner
stratification~\cite[Cor.~12.9]{gubler}.
We examine this compactification explicitly.

A toric scheme over $R$ corresponds to a fan in $\RR^3 \times \RR_{\geq
0}$~\cite[Sec.~7]{gubler}, and we will look at one affine chart of the scheme
coming from the cone over the topological stratification of $\Trop(V)$, placed
at height~$1$.
Our usual ray of
$\Trop(V)$ produces the cone generated by
the vectors $(0,0,1,0)$ and $(0,0,-1,1)$. The corresponding affine toric scheme
is $\Spec R[x^\pm, y^\pm, v]$, where the subtorus $\Spec K[x^\pm, y^\pm, z^\pm]$
is
defined by $v = \pi z$. Near the point in the special fiber defined by $x=y=1$
and $v=0$, the closure $\overline V \subset \Spec R[x^\pm, y^\pm, v]$ of $V$ has
two branches, given by substituting $w = \pi z$ into
the analytic equations (\ref{eq:y}) and~(\ref{eq:x}).
\begin{align*}
y &= 1 + \frac{w}{2} \left( 1 + \sqrt{\frac{1 - 3 w}{1 + w}} \right)  &
y &= 1 + \frac{w}{2} \left( 1 - \sqrt{\frac{1 - 3 w}{1 + w}} \right) \\
x &= 1 -\pi + 2\pi w + \cdots &
x &= 1 -\pi w^3 + \pi w^4 + \cdots
\end{align*}
Thus, $\overline V$ consists of two analytic branches, each isomorphic to the
completion of the two-dimensional scheme $\Spec R[w]$, meeting at the single
closed point $(x,y,w)=(1,1,0)$. This is a
textbook example of a non-Cohen-Macaulay singularity, which was the ingredient
suggested in~\cite[Ex.~5.2]{st} for building a tropical variety with a
polyhedral structure which is too coarse to be tropical.
As they argued, the non-Cohen-Macaulay point, together with Theorem~1.2
of\cite{tevelev} give another proof that this (partial) compactification is not
tropical.

\section{Locally matroidal tropical varieties}\label{sec:matroidal}

In this section, we more systematically compare the Gr\"obner and topological
stratifications.  Proposition~\ref{prop:top-charac} relates the topological
stratification to the structure of the initial ideals.  The main result of this
section is Theorem~\ref{thm:matroidal}, which gives criterion for the two
stratifications to agree.

We first need several lemmas, which extend standard results to initial ideals
at points whose coordinates are not necessarily in the value group. In
particular, Lemma~\ref{lem:degen} tells us that even when $\init_w(V)$ is not
the special fiber of~$V^w$, it can be realized as
the special fiber of a flat family over some extension of~$R$.

\begin{lem}\label{lem:base-change}
Let $\widetilde K \supset K$ be an extension of valued fields and $w$ any point in
$\RR^N$. Then $\init_w(V \times \Spec\widetilde K) = \init_w(V) \times \Spec\widetilde k$ as
subschemes of the torus, where $\widetilde k$ is the residue field of $\widetilde K$.
\end{lem}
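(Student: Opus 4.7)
The plan is to reduce Lemma~\ref{lem:base-change} to an ideal-theoretic identity inside $\widetilde R[x^\pm]^w$ and then verify that identity. First, I would choose a splitting $\tilde r\colon \widetilde\Gamma \to \widetilde K^\times$ extending the given $r\colon \Gamma \to K^\times$; such an extension exists under the standing hypotheses, and $\init_w$ does not depend on the choice. With this choice, the inclusion $R[x^\pm]^w \hookrightarrow \widetilde R[x^\pm]^w$ fits into a commutative diagram
\[
\begin{CD}
R[x^\pm]^w @>\phi>> k[x^\pm] \\
@VVV @VVV \\
\widetilde R[x^\pm]^w @>\widetilde\phi>> \widetilde k[x^\pm],
\end{CD}
\]
because a monomial $a x^u$ with $a \in K^\times$ satisfies $\val(a) + u \cdot w = 0$ only when $u \cdot w = -\val(a) \in \Gamma$, and in that case $\tilde r(u \cdot w) = r(u \cdot w)$, so the two residue images coincide under $k \hookrightarrow \widetilde k$.

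Writing $I$ for the ideal of $V$ in $K[x^\pm]$ and $\widetilde I = I \cdot \widetilde K[x^\pm]$ for the ideal of $V \times \Spec\widetilde K$, the initial degenerations are cut out by $\phi(I \cap R[x^\pm]^w) \cdot k[x^\pm]$ and $\widetilde\phi(\widetilde I \cap \widetilde R[x^\pm]^w) \cdot \widetilde k[x^\pm]$, respectively. The inclusion $I \cap R[x^\pm]^w \subseteq \widetilde I \cap \widetilde R[x^\pm]^w$ combined with the commutative square above yields the scheme containment $\init_w(V \times \Spec\widetilde K) \subseteq \init_w(V) \times \Spec\widetilde k$. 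For the reverse inclusion, I would reduce to the ring-theoretic identity
\[
\widetilde I \cap \widetilde R[x^\pm]^w \;=\; (I \cap R[x^\pm]^w) \cdot \widetilde R[x^\pm]^w.
\]
Granting this identity, any $\tilde h$ on the left decomposes as $\sum_j \tilde g_j h_j$ with $h_j \in I \cap R[x^\pm]^w$ and $\tilde g_j \in \widetilde R[x^\pm]^w$, so $\widetilde\phi(\tilde h) \in \phi(I \cap R[x^\pm]^w) \cdot \widetilde k[x^\pm]$ by the commutative diagram.

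The main obstacle is the $\subseteq$ direction of this identity when $w$ has coordinates outside $\Gamma$. Given $\tilde h = \sum_i \tilde g_i f_i$ with $f_1,\ldots,f_s$ a finite generating set of $I$ and $\tilde g_i \in \widetilde K[x^\pm]$, individual summands need not have nonnegative $w$-valuation, and the cancellation among them is exactly what places $\tilde h$ in $\widetilde R[x^\pm]^w$. I would handle this by expanding each $\tilde g_i$ in monomials and rewriting $\tilde h$ using the richer family of Laurent-monomial translates $x^v f_i \in I$, rescaled by scalars in $K^\times$ so that they land in $R[x^\pm]^w$. This flexibility is essential: already for $I = (x - \pi)$ at $w = 1/2$ with $\Gamma = \ZZ$, the element $1 - \pi x^{-1} = x^{-1}(x-\pi)$ lies in $I \cap R[x^\pm]^w$, whereas no $K^\times$-rescaling of $x - \pi$ itself does. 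To account for the cancellations among summands of negative $w$-valuation, I expect to invoke the finitely generated module of syzygies of $(f_1,\ldots,f_s)$ over $K[x^\pm]$, which is preserved by the flat base change $K \to \widetilde K$, in order to reorganize the decomposition before rescaling.
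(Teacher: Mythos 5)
Your reduction is exactly the one the paper makes: the commutative square relating $\phi$ and $\widetilde\phi$ (via compatible splittings), together with the identity
\[
\widetilde I \cap \widetilde R[x^\pm]^w \;=\; (I \cap R[x^\pm]^w)\cdot \widetilde R[x^\pm]^w,
\]
yields the lemma by chasing the ideal of $V^w$ around the square, and the containment $\supseteq$ is immediate. The difference is that the paper does not attempt to prove this identity from scratch; it cites \cite[Thm.~A.3]{op}, which asserts precisely that the closure in the tilted torus commutes with extension of valued fields (noting that the algebraic-closedness hypothesis there is not used in its proof).

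The gap is in your treatment of the containment $\subseteq$, which is the entire content of that cited theorem. What you offer is a plan, not an argument: rewriting $\tilde h = \sum_i \tilde g_i f_i$ via monomial translates and then ``invoking the syzygy module'' does not address the actual difficulty, which is valuation-theoretic rather than linear-algebraic. The syzygies of $(f_1,\ldots,f_s)$ over $K[x^\pm]$ (and their flat extension to $\widetilde K[x^\pm]$) record relations with Laurent-polynomial coefficients but carry no information about the $w$-valuations of those coefficients, so it is unclear how reorganizing a representation modulo syzygies forces the new summands into $\widetilde R[x^\pm]^w$; some mechanism for strictly decreasing the amount of cancellation (a division or standard-basis algorithm adapted to $R[x^\pm]^w$, or a reduction to the flatness and finite presentation of $R[x^\pm]^w/(I\cap R[x^\pm]^w)$ over $R$ as in \cite[Prop.~A.1]{op}) is needed and is not supplied. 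Your example $I=(x-\pi)$, $w=1/2$ correctly flags the subtlety, but flagging it is not resolving it. Either cite \cite[Thm.~A.3]{op} as the paper does, or carry out one of these arguments in full.
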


\begin{proof}
We let $\widetilde V^w$ be the closure of the extension $V \times \Spec\widetilde K$ in the
tilted torus $R[x_1^\pm, \ldots, x_N^\pm]^w$, as in
Definition~\ref{def:init}. We have a commutative diagram:
\begin{equation*}\begin{CD}
\widetilde R[x_1^\pm, \ldots, x_N^\pm]^w
@>\widetilde\phi>> \widetilde k[x_1^\pm, \ldots, x_N^\pm] \\
@AAA @AAA \\
R[x_1^\pm, \ldots x_N^\pm]^w @>\phi>> k[x_1^\pm, \ldots, x_N^\pm],
\end{CD}\end{equation*}
where $\phi$ and~$\widetilde\phi$ are as in Definition~\ref{def:init}.
By
\cite[Thm.~A.3]{op}, the ideal of $\widetilde V^w$ is the image of the ideal
of~$V^w$ under the inclusion on the left (although \cite{op} assumes that $K$ is
algebraically closed, this isn't used in the proof of Theorem~A.3). Thus, the
ideal of $\init_w(V \times \Spec
\widetilde K)$ is the image of the ideal of $V^w$ by the upper-left path around
the
square. The ideal of $\init_w(V) \times \Spec \widetilde k$ is the image by
the lower right path, and so the two schemes are equal.
\end{proof}

\begin{lem}\label{lem:degen}
Let $w$ be a point in $\Trop(V)$. Then there exists a valuation ring $\widetilde
R$ dominating $R$ and a flat family over $\widetilde R$ whose special fiber is
isomorphic to $\init_w(V)$ and whose general fiber is $V \times \Spec \widetilde
K$, where $\widetilde K$ is the fraction field of $\widetilde R$.
\end{lem}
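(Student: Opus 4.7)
The plan is to reduce to the case $w \in \Gamma^N$, in which case one can take $\widetilde R = R$ and the family $V^w$ itself works. Indeed, when $w \in \Gamma^N$, the substitution $y_i = r(w_i)^{-1} x_i$ identifies the tilted algebra $R[x_1^\pm, \ldots, x_N^\pm]^w$ with the ordinary Laurent polynomial ring $R[y_1^\pm, \ldots, y_N^\pm]$, and under this identification the map $\phi$ of Definition~\ref{def:init} becomes reduction modulo~$\m$. The closure $V^w$ in $\Spec R[y^\pm]$ has coordinate ring $R[y^\pm]/J$, where $J$ consists of those elements of $R[y^\pm]$ vanishing on~$V$; this quotient injects into the coordinate ring of~$V$, so it is $R$-torsion-free and hence flat over the valuation ring~$R$. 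By construction, the generic fiber of $V^w$ is~$V$ and the special fiber is $\init_w(V)$.

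For general $w \in \RR^N$, I would first enlarge $K$ to a valued field $\widetilde K$ whose value group $\widetilde\Gamma$ contains each~$w_i$. Concretely, one can successively adjoin transcendentals $t_i$ with $\val(t_i) = w_i$ via the Gauss extension of the valuation, and extend the splitting by $\widetilde r(w_i) = t_i$. Let $\widetilde R \subset \widetilde K$ denote the resulting valuation ring (which dominates~$R$), with residue field~$\widetilde k$.

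Applying the first paragraph over $\widetilde R$, the closure $\widetilde V^w$ of $V \times \Spec\widetilde K$ in $\Spec \widetilde R[x^\pm]^w$ is flat over $\widetilde R$, with generic fiber $V \times \Spec\widetilde K$ and special fiber $\init_w(V \times \Spec\widetilde K)$. By Lemma~\ref{lem:base-change}, the latter equals $\init_w(V) \times \Spec \widetilde k$, the base change of $\init_w(V)$ to~$\widetilde k$, providing the required isomorphism. The main technical ingredient is the construction of $\widetilde K$ together with a compatible extension of the splitting; once this is in place, the lemma follows from the observation that over the enlarged field the construction of Definition~\ref{def:init} literally coincides with the special fiber of the flat family~$\widetilde V^w$.
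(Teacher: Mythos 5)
Your overall strategy --- extend $K$ to a valued field whose value group contains the coordinates of $w$, observe that over such a field the tilted torus is just a rescaled Laurent polynomial ring so that $\widetilde V^w$ is a flat family with special fiber the initial degeneration, and then descend via Lemma~\ref{lem:base-change} --- is exactly the paper's approach, and the pieces you supply for it (flatness from torsion-freeness over a valuation ring, the substitution $y_i = r(w_i)^{-1}x_i$) are correct. However, there is a genuine gap in your construction of $\widetilde K$. If you always adjoin a transcendental $t_i$ with the Gauss extension of the valuation, the residue field is preserved only when no nonzero multiple of $w_i$ lies in the current value group. If $m w_i \in \Gamma_{i-1}$ for some minimal $m \geq 1$ (for instance $\Gamma = \ZZ$ and $w_i = 1/2$, or even $w_i \in \Gamma$ itself for a coordinate not covered by your first paragraph), then $\overline{t_i^m / r(m w_i)}$ is a new transcendental in the residue field, so $\widetilde k \supsetneq k$. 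Your family then has special fiber $\init_w(V) \times_k \Spec\widetilde k$, which is the base change of $\init_w(V)$ and is \emph{not} isomorphic to $\init_w(V)$ (already for $\init_w(V) = \Spec k$ the base change is $\Spec \widetilde k$). So the final sentence of your third paragraph, ``providing the required isomorphism,'' does not follow. Note also that the problem can arise at a later stage even for ``generic'' $w$: after adjoining $t_1$ with $\val(t_1)=w_1$, the relevant group is $\Gamma + \ZZ w_1$, and $w_2$ may acquire a multiple in it.

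The fix is the case analysis the paper performs: at stage $i$, if no multiple of $w_i$ lies in $\Gamma_{i-1}$, take the transcendental Gauss extension as you do (residue field unchanged); if $m w_i \in \Gamma_{i-1}$ with $m$ minimal, instead adjoin an $m$-th root $t_i$ of $r(m w_i)$, a degree-$m$ totally ramified algebraic extension, which enlarges the value group by $\ZZ w_i$ while keeping $\widetilde k = k$ (since $ef \leq m$ forces $f=1$). With $\widetilde k = k$, Lemma~\ref{lem:base-change} gives $\init_w(V\times\Spec\widetilde K) = \init_w(V)$ on the nose, and the rest of your argument goes through.
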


\begin{proof}
We build the extension $\widetilde K \supset K$, together with a splitting of
its valuation homomorphism one coordinate of~$w$ at a time. Let $K_0$
and~$\Gamma_0$ equal $K$ and $\Gamma$ respectively, and we inductively define
$K_i$ as follows. If no multiple of $w_i$ lies in $\Gamma_{i-1}$, then we take
$K_i$ to be the transcendental extension $K_{i-1}(t_i)$ with the unique
extension of the valuation given by $\val(t_i) = w_i$. Otherwise, if $m
\val(w_i)$ is in $\Gamma_{i-1}$, with $m$ minimal, then $K_i$ is constructed as
the extension $K(t_i)$, where $t_i$ is a primitive $m$th root of $r(m \val(w_i))$. In both
cases, we also extend the splitting, which by abuse of notation, we still call
$r$, by setting $r(w_i) = t_i$. In the end, we have produced a valued field
$\widetilde K = K_N$ whose value group $\widetilde\Gamma = \Gamma_N$ includes
the entries of $w$.
We also note that at each
stage of our construction of $\widetilde K$, the residue field of the valuation
ring is unchanged.

The desired family will be $\widetilde V^w$, which is flat and finite presentation over
$\Spec \widetilde R$~\cite[Prop.~A.1]{op}.  By Lemma~\ref{lem:base-change}, we
know that $\init_w(V \times \Spec \widetilde K)$ is $\init_w(V)$, and so it only
remains to show that the map $\widetilde\phi$ as in Definition~\ref{def:init}
induces an isomorphism between $\init_w(V)$ and the special fiber of $V^w$.
First, since $w_i \in \widetilde\Gamma$, each variable $x_i$ of
$k[x_1^\pm,\ldots, x_N^\pm]$ is the image of
$r(-w_i) x_i$ under $\widetilde\phi$, so $\widetilde\phi$ is surjective. Second,
for any monomial $a x^u$ in the kernel of $\widetilde\phi$, i.e.\ with $\val(a)
+ w \cdot u > 0$, then $w \cdot u$ is in $\widetilde\Gamma$, so there exists $b
\in \widetilde R$, with $\val(b) = -w \cdot u$. Thus, $b x^u \in R[x_1^\pm,
\ldots, x_N^\pm]$, and $a/b \in \widetilde R$ with positive valuation, so $a
x^u$ vanishes on the special fiber. We've shown that the special fiber of
$\widetilde V^w$ is $\init_w(\widetilde V) = \init_w(V)$, so $\widetilde V^w$
forms the desired family.
\end{proof}

\begin{lem}\label{lem:link}
Let $w$ be a point in $\Trop(V)$. Then for sufficiently small vectors~$\delta$,
$\init_{w + \delta}(V) = \init_{\delta}(\init_w(V))$, where the last initial
ideal is taken with respect to the trivial valuation on~$k$.
\end{lem}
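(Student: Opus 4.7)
The plan is to realize $\init_w(V)$ as a flat degeneration via Lemma~\ref{lem:degen} and then analyze the effect of a small perturbation of the weight on a generating set of its defining ideal.

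First, by Lemma~\ref{lem:base-change} the statement is invariant under replacing $K$ by a valued extension, so I would pass to the field $\widetilde K$ constructed in the proof of Lemma~\ref{lem:degen} with $w \in \widetilde\Gamma^N$. That lemma then supplies a flat family $\widetilde V^w$ over $\widetilde R$ whose special fiber is $\init_w(V)$ and whose generic fiber is $V \times \Spec \widetilde K$. I would choose a finite generating set $f_1, \ldots, f_r$ for the ideal of $\widetilde V^w$ inside $\widetilde R[x_1^\pm, \ldots, x_N^\pm]^w$; by flatness of $\widetilde V^w$, their reductions $\phi(f_i)$ generate the ideal of $\init_w(V)$ in $\widetilde k[x_1^\pm, \ldots, x_N^\pm]$.

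Next, I would analyze how each $f_i = \sum_{u \in S_i} a_{i,u} x^u$ transforms under a perturbation. Each monomial satisfies $\val(a_{i,u}) + w \cdot u \geq 0$, with equality precisely for the terms surviving in $\phi(f_i)$. Since the supports $S_i$ are finite, there is an explicit $\epsilon > 0$, depending on the gap between zero and the smallest positive value of $\val(a_{i,u}) + w \cdot u$ and on the diameter of $\bigcup_i S_i$, such that whenever the coordinates of $\delta$ are smaller than $\epsilon$ in absolute value, the monomials with $\val(a_{i,u}) + w \cdot u > 0$ still have strictly positive twisted valuation with respect to $w+\delta$, while those with $\val(a_{i,u}) + w \cdot u = 0$ have twisted valuation exactly $\delta \cdot u$. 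After enlarging $\widetilde K$ further so that the resulting minima lie in the value group, an appropriate rescaling of $f_i$ lies in $\widetilde R[x^\pm]^{w+\delta}$, and its image under the homomorphism $\phi$ associated to the weight $w+\delta$ is, up to a unit in $\widetilde k^\times$, the initial form $\init_\delta(\phi(f_i))$ taken with respect to the trivial valuation on $\widetilde k$.

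Finally, I would conclude by arguing that these rescaled $f_i$ still generate the ideal of $\widetilde V^{w+\delta}$ in $\widetilde R[x^\pm]^{w+\delta}$, so that their reductions generate $I(\init_{w+\delta}(V))$; that ideal is then visibly equal to $I(\init_\delta(\init_w(V)))$, which is generated by the $\init_\delta(\phi(f_i))$. The main obstacle is precisely this final generation claim, which is the analog in this setting of the stability of a tropical Gr\"obner basis under sufficiently small perturbations of the weight vector. I would verify it by expressing an arbitrary element of $I(V) \cap \widetilde R[x^\pm]^{w+\delta}$ as a $\widetilde K[x^\pm]$-linear combination of the $f_i$ and then checking, using the flatness of $\widetilde V^w$ together with the explicit bounds on $\delta$, that the coefficients already lie in $\widetilde R[x^\pm]^{w+\delta}$.
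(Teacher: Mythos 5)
Your reduction to the field $\widetilde K$ via Lemma~\ref{lem:base-change} and the flat family of Lemma~\ref{lem:degen} matches the first half of the paper's argument, but the core of your proof has a genuine gap, and it sits exactly where you flag "the main obstacle." Choosing a finite generating set $f_1,\ldots,f_r$ of the ideal of $\widetilde V^w$ and checking term-by-term how each $f_i$ behaves under the perturbed weight is fine, but the claim that the rescaled $f_i$ generate the ideal of $\widetilde V^{w+\delta}$ is precisely the nontrivial Gr\"obner-basis stability statement, and your proposed verification does not establish it. Writing an arbitrary element of $I(V)\cap \widetilde R[x^\pm]^{w+\delta}$ as a $\widetilde K[x^\pm]$-linear combination of the $f_i$ gives no control whatsoever over the coefficients: there is no canonical choice of such an expression, the supports of the coefficients appearing in membership certificates are unbounded over the ideal, so the threshold $\epsilon$ you extracted from the finite supports of the $f_i$ cannot bound them, and flatness of $\widetilde V^w$ over the valuation ring (which here just amounts to torsion-freeness, i.e.\ saturation of the ideal with respect to $\m$) says nothing about membership in the differently tilted ring $\widetilde R[x^\pm]^{w+\delta}$. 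There is a second, related gap in the last step: even granting the generation claim, the ideal $I\bigl(\init_\delta(\init_w(V))\bigr)$ is not "visibly" generated by the initial forms $\init_\delta(\phi(f_i))$; that would require the $\phi(f_i)$ to be a Gr\"obner basis of $I(\init_w(V))$ with respect to the weight $\delta$, which an arbitrary generating set need not be. Without these two facts you only get the containment $\init_\delta(\init_w(V)) \subseteq \init_{w+\delta}(V)$, not equality.

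The paper sidesteps all of this by invoking the existing Gr\"obner complex machinery: it chooses a compactification $\Gm^N \subset \PP^N$, takes $\delta$ small enough that every Gr\"obner polyhedron containing $w+\delta$ also contains $w$, passes to the extension $\widetilde K$ exactly as you do, and then cites Proposition~10.9 of Gubler's paper for the identity $\init_{w+\delta}(V_{\widetilde K}) = \init_\delta(\init_w(V_{\widetilde K}))$, transferring back to $K$ by Lemma~\ref{lem:base-change}. The content you are missing is exactly what that proposition (and the construction of the Gr\"obner complex via homogenization) supplies. To make your direct approach work you would need to prove such a stability theorem yourself — for instance by homogenizing, working with a finite universal Gr\"obner-type basis, and using Noetherian arguments to get a uniform $\epsilon$ — rather than asserting that arbitrary combinations have integral coefficients.
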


\begin{proof}
By choosing a compactification $\Gm^N$ into $\PP^N$, we can partition $\RR^N$
into a Gr\"obner complex as in~\cite[Sec.~10]{gubler}. Now we claim that the
lemma is true if we take $\delta$ sufficiently small such that any polyhedron
containing $w + \delta$ also contains $w$. If so, we can choose an extension
$\widetilde K \subset K$ whose valuation group contains the coordinates of $w$
and $w+\delta$ as in
the proof of Lemma~\ref{lem:degen}. Then Proposition~10.9 of~\cite{gubler} tells
us that $\init_{w+\delta}(V_{\widetilde K}) = \init_\delta(\init_w(V_{\widetilde
K}))$, and Lemma~\ref{lem:base-change} tells us that the same equality holds
over~$K$.
\end{proof}

\begin{prop}\label{prop:charac}
A point $w \in \Trop(V)$ is in a Gr\"obner stratum of dimension~$m$ if
and only if the maximal subtorus preserving $\init_w(V)$ has dimension~$m$.
\end{prop}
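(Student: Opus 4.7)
The plan is to use Lemma~\ref{lem:link} to reduce the statement to a question about torus-invariance of initial ideals in the trivially valued setting, and then identify the directions of invariance with the cocharacter lattice of the stabilizing subtorus.

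Write $J \subset k[x_1^\pm,\ldots,x_N^\pm]$ for the ideal of $\init_w(V)$, and let $T_J$ denote the maximal subtorus of $\Gm^N$ preserving $\init_w(V)$, with cocharacter lattice $N_J \subset \ZZ^N$ and $M_J := N_J^\perp \cap \ZZ^N$; then $T_J$ has character lattice $\ZZ^N/M_J$, its invariance is equivalent to $J$ being $\ZZ^N/M_J$-graded, and $\dim T_J = \operatorname{rank} N_J$. By Lemma~\ref{lem:link}, for sufficiently small $\delta \in \RR^N$ we have $\init_{w+\delta}(V) = \init_\delta(J)$, with the right-hand side computed with respect to the trivial valuation on~$k$. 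Thus a small neighborhood of $w$ in its Gr\"obner stratum coincides with $w + (\Lambda_J \cap U)$ for some small neighborhood $U$ of the origin, where
\[
\Lambda_J = \{\delta \in \RR^N : \init_\delta(J) = J\}.
\]
Since the Gr\"obner stratum is an open subset of an affine subspace of $\RR^N$, its dimension equals $\dim_\RR \Lambda_J$, and the proposition reduces to the identification $\Lambda_J = (N_J)_\RR$.

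The condition $\init_\delta(J) = J$ is equivalent to $J$ being homogeneous for the $\RR$-grading $u \mapsto \delta\cdot u$ on monomial exponents. If $\delta \in (N_J)_\RR$ then $\delta\cdot M_J = 0$, so this grading factors through $\ZZ^N/M_J$; since $J$ is already $\ZZ^N/M_J$-graded by $T_J$-invariance, it is automatically $\delta$-homogeneous, giving $(N_J)_\RR \subset \Lambda_J$. Conversely, suppose $\init_\delta(J) = J$, and set $M'' := \ZZ^N \cap \ker(\delta\cdot -)$, a saturated sublattice. Then $J$ is $\ZZ^N/M''$-graded, hence invariant under the subtorus $T'' \subset \Gm^N$ with character lattice $\ZZ^N/M''$. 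Maximality of $T_J$ forces $T'' \subset T_J$, which at the level of character lattices translates to $M_J \subset M''$; combined with $\delta \cdot M'' = 0$ this gives $\delta \cdot M_J = 0$, i.e.\ $\delta \in M_J^\perp \otimes \RR = (N_J)_\RR$.

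The step I expect to require the most care is verifying that Definition~\ref{def:init}, in the trivially valued setting, specializes to the classical initial ideal whose fixed locus is exactly the set of $\delta$-homogeneous ideals; once this is in hand, the remaining argument is the lattice bookkeeping above, and the conclusion is that the Gr\"obner stratum at $w$ is an open subset of the affine linear subspace $w + (N_J)_\RR$ of dimension $\operatorname{rank} N_J = \dim T_J$.
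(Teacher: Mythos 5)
Your proposal is correct and takes essentially the same route as the paper: apply Lemma~\ref{lem:link} to identify the stratum near $w$ with the set of $\delta$ fixing $\init_w(V)$ under $\init_\delta$ in the trivially valued setting, and then identify that set with the (real span of the) cocharacter lattice of the maximal preserving subtorus. Your explicit lattice bookkeeping (including the saturated kernel lattice $M''$ for irrational $\delta$) simply spells out the correspondence between weight-fixed initial ideals and subtorus invariance that the paper asserts in a single line.
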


\begin{proof}
For sufficiently small vectors $\delta$, the initial degeneration $\init_{w +
\delta}(V)$ equals the initial degeneration $\init_{\delta}(\init_w(V))$
by Lemma~\ref{lem:link}. Therefore, $w$ lies in a $m$-dimensional Gr\"obner
stratum if and only if $\init_\delta(\init_w(V))$ is equal to $\init_w(V)$ for
all $\delta$ in a $m$-dimensional vector space.
The scheme $\init_w(V)$ is unchanged by taking initial with respect to some
rational weight vector $\delta$ if and only if $\init_w(V)$ is invariant under
the action of the corresponding one-dimensional subtorus of $\Gm^N$. Thus, the
vector space of all weights under which $\init_w(V)$ is unchanged corresponds to
the maximal subtorus under which $\init_w(V)$ is invariant, and in particular,
they have the same dimension.
\end{proof}

The proof of Proposition~\ref{prop:charac} already shows that the Gr\"obner
stratification refines the topological one. However, it can also be seen through
the following characterization of the topological stratification in terms of
initial ideals.

\begin{prop}\label{prop:top-charac}
A point $w \in \Trop(V)$ is in the topological stratum of dimension~$m$ if and
only if the maximal subtorus preserving the reduced subscheme of $\init_w(V)$
has dimension~$m$.
\end{prop}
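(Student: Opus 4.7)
The plan is to combine Lemma~\ref{lem:link}, which locally identifies $\Trop(V)$ near $w$ with the fan $\Trop(\init_w(V))$ near the origin, with the standard identification between the lineality space of a tropicalization and the cocharacter space of the stabilizer of the underlying reduced subscheme.

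First, I would use Lemma~\ref{lem:link} to reduce to a question about a single fan. For all sufficiently small $\delta$, the equality $\init_{w+\delta}(V) = \init_\delta(\init_w(V))$ implies that $w+\delta \in \Trop(V)$ if and only if $\delta \in \Trop(\init_w(V))$, so a neighborhood of $w$ in $\Trop(V)$ agrees with the translate by $w$ of a neighborhood of $0$ in $\Trop(\init_w(V))$. Because the topological stratification depends only on the underlying subset of $\RR^N$, the dimension of the topological stratum at $w$ equals that of the topological stratum at $0$ in $\Trop(\init_w(V))$.

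Next, since $\init_w(V)$ lives over the trivially valued residue field, $\Trop(\init_w(V))$ is a polyhedral fan with apex at the origin. For such a fan, any local product decomposition at the origin extends by positive scaling to a global decomposition, so the maximal $m$ that can appear as a product factor at the origin equals the dimension of the lineality space. Thus the topological stratum at $w$ has dimension equal to the lineality of $\Trop(\init_w(V))$, and it remains to identify this lineality with the cocharacter space of the maximal subtorus preserving the reduced subscheme of $\init_w(V)$. One inclusion is immediate: any subtorus stabilizing the reduced subscheme translates $\Trop(\init_w(V))$ into itself and hence contributes to the lineality.

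The main obstacle will be the reverse inclusion, i.e.\ that every lineality direction comes from a subtorus stabilizing the underlying set. This is a standard consequence of the results of Tevelev and of Maclagan--Sturmfels when $\init_w(V)$ is irreducible. In the reducible case, one must further argue that a lineality direction of a union $\bigcup_i \Trop(X_i)$ lies in the lineality of each $\Trop(X_i)$ individually, which follows from the purity of the $\Trop(X_i)$ together with the fact that a connected subtorus cannot permute distinct irreducible components of the underlying set.
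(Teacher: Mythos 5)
Your overall route is the same as the paper's: both arguments use Lemma~\ref{lem:link} to reduce to the fan $\Trop(\init_w(V))$ over the trivially valued residue field, observe that for a fan the local product structure at the origin is the lineality space, and note that the easy inclusion (a stabilizing subtorus contributes to the lineality) is immediate. The problem is in your converse direction, and specifically in the reducible case, which is exactly where the real content of the proposition lies. Your claim that a lineality direction of $\bigcup_i \Trop(X_i)$ must lie in the lineality of each $\Trop(X_i)$ is not justified by ``purity plus the fact that a connected subtorus cannot permute distinct irreducible components.'' There is no subtorus action available at that point in the argument --- producing one is the goal --- and the sets $\Trop(X_i)$ are not the connected or irreducible components of the union in any intrinsic sense: they may overlap, share cones, or even contain one another, so translation-invariance of the union does not formally restrict to the individual pieces. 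As stated, this step is a gap, not a citation to a standard fact.

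The statement you need is true, but the clean way to get it --- and what the paper actually does --- is to avoid decomposing the tropicalization at all and instead run a dimension count on the scheme $\init_w(V)$ directly. After a coordinate change, let $p$ be the projection of $\Gm^N$ killing the $m$ lineality directions. Since $\Trop(p(\init_w(V))) = p(\Trop(\init_w(V)))$ and the latter has dimension $d-m$ by translation-invariance, Bieri--Groves gives $\dim p(\init_w(V)) = d-m$. Hence for \emph{each} irreducible component $V'$ of $\init_w(V)$ (all of dimension $d$, by flatness of the degeneration), $p(V')$ has dimension at most $d-m$, so $p^{-1}(\overline{p(V')})$ is irreducible of dimension at most $d$ and contains $V'$; equality forces $V'$ to be a union of fibers of $p$, i.e.\ invariant under the $m$-dimensional subtorus. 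This handles all components simultaneously and makes your appeal to the irreducible case, and the problematic reduction to it, unnecessary. With that replacement your argument closes up and coincides with the paper's proof.
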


\begin{proof}
First, let $W$ denote the reduced induced subscheme of $\init_w(V)$ and suppose
that $W$ is preserved by some $m$-dimensional subtorus, and thus the tropical
variety of~$W$ is a product with the vector space corresponding to the
subtorus. Since taking reduced subschemes does not change the underlying set of
a tropical
variety, the tropical variety of $\init_w(V)$ now has the same property. Thus,
$\Trop(V)$ is also a product with a vector space in a neighborhood of~$w$
by Lemma~\ref{lem:link}, so $w$ is in an $m$-dimensional topological stratum

Conversely, suppose that $\Trop(V)$ is a product with a $m$-dimensional linear
space in a neighborhood of~$w$.  Then $\Trop(\init_w(V))$ is globally a product
with a vector space, and we can change coordinates to assume that the vector
space generated by the first $m$ coordinate vectors.  If we let $p$ be the
projection of $\Gm^N$ onto the last $N-m$ coordinates, then
$\Trop(p(\init_w(V)))$ is the projection of the $\Trop(\init_w(V))$ onto the
last
$N-m$ coordinates.  Therefore, the Bieri-Groves theorem tells us that
$p(\init_w(V))$ has dimension $d-m$,
where $d$ is the dimension of~$V$.
For any irreducible component~$V'$
of $\init_w(V)$, the projection $p(V')$ must have dimension at most $d-m$,
so $p^{-1}(p(V'))$ has dimension at most~$d$.  However, $V'$ is a
$d$-dimensional variety and is contained in $p^{-1}(p(V'))$, so they must be
equal.  Therefore, $V'$ is invariant under the subtorus which acts on the first
$m$ coordinates, as is the reduced subscheme of $\init_w(V)$.
\end{proof}

\begin{cor}
If $\Trop(V)$ is a linear space in a neighborhood of a point~$w$, then the
support of $\init_w(V)$ is a finite union of torus orbits.
\end{cor}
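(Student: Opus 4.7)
The plan is to use Proposition~\ref{prop:top-charac} to convert the topological hypothesis on $\Trop(V)$ into torus-invariance of the reduced initial scheme, and then to observe that the support has no room to be anything other than a finite union of cosets.

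First I would let $m$ denote the dimension of the linear space that $\Trop(V)$ agrees with near $w$, so that $w$ lies in the $m$-dimensional topological stratum. Proposition~\ref{prop:top-charac} then produces an $m$-dimensional subtorus $T' \subseteq \Gm^N$ preserving the reduced subscheme $W := \init_w(V)_{\mathrm{red}}$. After a coordinate change on $\Gm^N$, I may assume $T'$ is the subtorus spanned by the first $m$ coordinate directions, and write $p\colon \Gm^N \rightarrow \Gm^{N-m}$ for the projection onto the last $N-m$ coordinates. Because $W$ is $T'$-invariant, $W = p^{-1}(p(W))$ as a set.

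Next I would analyze the tropicalization of $p(W)$. By Lemma~\ref{lem:link}, $\Trop(W)$ equals the translate of the local structure of $\Trop(V)$ at $w$ back to the origin, hence equals the coordinate linear space spanned by the first $m$ directions. Projecting to the last $N - m$ coordinates collapses this to a single point, so $\Trop(p(W))$ is a point; the Bieri-Groves theorem then forces $\dim p(W) = 0$, and a zero-dimensional closed subscheme of a finite-type $k$-scheme is a finite set of closed points.

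Putting these together, $W = p^{-1}(p(W))$ is a finite union of fibers of $p$, and each fiber is by construction a single coset of $T'$, i.e.\ a torus orbit in $\Gm^N$. Since the support of $\init_w(V)$ coincides with $W$, this is precisely what was to be shown. The only delicate point is the identification $W = p^{-1}(p(W))$, which is exactly where the closedness of cosets of the subtorus $T'$ in $\Gm^N$ enters; everything else is a short invocation of Proposition~\ref{prop:top-charac}, Lemma~\ref{lem:link}, and Bieri-Groves.
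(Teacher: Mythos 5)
Your argument is correct and is essentially the paper's: the corollary is stated without separate proof because it follows from the converse direction of the proof of Proposition~\ref{prop:top-charac}, which is exactly the projection-plus-Bieri--Groves argument you reproduce (there, with $m=d$, each component $V'$ satisfies $V' = p^{-1}(p(V'))$ with $p(V')$ zero-dimensional, hence is a torus orbit). Your packaging via the reduced subscheme $W$ and its torus-invariance is just a mild rephrasing of the same steps.
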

Recall that at a point $w$ in the interior of a maximal cone of the Gr\"obner
complex, the multiplicity is the sum of the multiplicities of all primary
components of $\init_w(V)$.  By the balancing condition, the multiplicities in
any neighborhood of a locally linear point will all be equal. However, we can
given an intrinsic definition of the multiplicity at any locally linear point
$w$ as the sum
\begin{equation*}
\sum_{W \in \operatorname{Min}(\init_w(V))}
\operatorname{length}(\mathcal O_{V,W})
\left[ K(W)^T : k\right]
\end{equation*}
where $\operatorname{Min}(\init_w(V))$ is the set of irreducible components
of~$\init_w(V)$, and $K(W)^T$ is the field of rational functions on $W$ which
are invariant under the maximal subtorus of $\Gm^N$ which preserves~$W$.

The tropicalizations of linear spaces have a purely combinatorial
description in terms of the associated matroid~\cite{ak}, which we recall here.
Let $M$ be a matroid with ground set $[N]$ and bases $B_1, \ldots, B_r$. For any
basis $B = \{b_1, \ldots, b_m\}$ and $w \in \RR^N$, let $w_B$ be the sum
$w_{b_1} + w_{b_2} + \cdots + w_{b_m}$. Let $M_w$ be the set of bases $B$
for which $w_B$ is minimized, and it can be shown that $M_w$ is also the set of
bases of a matroid. The Bergman fan of $M$ is the set of vectors $w$
for which $M_w$ has no loops.
We say that a tropical variety is \emph{locally matroidal} at a point $w \in
\RR^N$ if
there exists a neighborhood of $w$ in which all multiplicities are~$1$ and which
is equal to a neighborhood of the Bergman fan of some matroid $M$, possibly
after a change of coordinates from an element of
$\operatorname{GL}_N(\ZZ)$.
Locally matroidal fans have been studied before as analogues of smooth
varieties, for example in~\cite{shaw} and~\cite[Sec.~7]{ks}.

\begin{thm}\label{thm:matroidal}
Let $w$ be locally matroidal point of $\Trop(V)$.
Then, in appropriate coordinates, $\init_w(V)$ is a
linear subspace of~$\Gm^N$. In particular, the
Gr\"obner stratification and the topological stratification agree in a
neighborhood of~$w$.
\end{thm}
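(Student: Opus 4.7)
The plan is to reduce via Lemma~\ref{lem:link} to a statement over the residue field with trivial valuation, and then to build $\init_w(V)$ from linear equations, one circuit of $M$ at a time. Applying Lemma~\ref{lem:link} at $w$, I may replace $V$ with $\init_w(V)$, $K$ with its trivially valued residue field $k$, and $w$ with $0$; after the prescribed $\operatorname{GL}_N(\ZZ)$ coordinate change the hypothesis becomes that $\Trop(V)$ is the Bergman fan of a matroid $M$ of rank $d = \dim V$ with multiplicity one on every maximal cone. Loops of $M$ contribute free coordinates, so I may assume $M$ is loopless. The goal is then to show $V = L \cap \Gm^N$ for some linear subspace $L \subseteq \mathbb{A}^N$.

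The core step is to extract a linear relation from each circuit. Fix a circuit $C \subseteq [N]$ of size $k+1$ and let $\pi_C\colon \Gm^N \to \Gm^C$ be the projection onto the coordinates in $C$. Because the projection of $\Trop(V)$ onto the coordinates in $C$ is the Bergman fan of the restriction $M|_C = U_{k,k+1}$, which has codimension one in $\RR^C$, the image $\pi_C(V)$ is a hypersurface in $\Gm^C$ whose tropicalization is the Bergman fan of $U_{k,k+1}$ with multiplicity one on every top-dimensional cone. The Newton polytope of a Laurent polynomial cutting out $\pi_C(V)$ is dual to this tropical hypersurface, and using that the lineality space of the Bergman fan of $U_{k,k+1}$ is spanned by $(1,\dots,1)$ and that its rays point in the directions $-\sum_{i \in F} e_i$ for proper flats $F$, the Newton polytope is forced to be the standard simplex $\operatorname{conv}(e_j : j \in C)$ with unit edge lengths. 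Hence $\pi_C(V)$ is cut out by a single linear form $\ell_C = \sum_{j \in C} a_j^{(C)} x_j$ with all $a_j^{(C)} \in k^\times$.

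Assembling the pieces, let $L \subseteq \mathbb{A}^N$ be the linear subspace defined by the forms $\ell_C$ as $C$ ranges over the circuits of $M$. Then $V \subseteq L \cap \Gm^N$, and the linear relations on the columns of the matrix whose rows are the forms $\ell_C$ reproduce exactly the circuits of $M$, so $\dim L = N - (N-d) = d$. Comparing multiplicities on $\Trop(V)$ and $\Trop(L \cap \Gm^N)$, both of which are one on the Bergman fan of $M$, forces this inclusion of equidimensional schemes to be an equality. The final sentence of the theorem follows from Propositions~\ref{prop:charac} and~\ref{prop:top-charac} together with Lemma~\ref{lem:link}: a linear subspace of the torus is reduced, and any of its further initial degenerations is again cut out by linear forms and hence reduced, so the maximal subtorus preserving $\init_{w'}(V)$ agrees with the one preserving its reduced structure at every $w'$ in a neighborhood of $w$.

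The main obstacle I anticipate is the Newton polytope analysis in the middle paragraph, which requires both that every coefficient $a_j^{(C)}$ be nonzero—forced by loopless-ness of $M|_C$—and that no higher-multiplicity factors appear in $\pi_C(V)$, which should follow from the fact that multiplicity one on $\Trop(V)$ propagates to multiplicity one on $\Trop(\pi_C(V))$. A secondary issue is verifying that the combined linear forms really cut out a linear space of the exact dimension $d$, which uses the correspondence between matroid circuits and linear dependencies of the columns of the realizing matrix.
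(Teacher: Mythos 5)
Your reduction via Lemma~\ref{lem:link} and your final paragraph are fine, but the core of the argument has a genuine gap: it only ever controls the \emph{reduced} structure of $\init_w(V)$, whereas the actual content of the theorem is that $\init_w(V)$ has no non-reduced (in particular embedded) structure at all. Tropical multiplicities are computed at generic points of maximal cones, so the hypothesis ``all multiplicities are $1$'' only gives generic reducedness; it cannot detect embedded primes. Concretely, in your circuit step the scheme-theoretic image $\pi_C(\init_w(V))$ need not be cut out by the linear form $\ell_C$ --- its ideal is $I(\init_w(V))\cap k[x_j^{\pm}: j\in C]$, which a priori could be generated by a non-reduced equation whose radical is $(\ell_C)$. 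So what your Newton polytope analysis can deliver (even granting it) is that $\ell_C$ lies in the \emph{radical} of $I(\init_w(V))$, i.e.\ that the reduced subscheme of $\init_w(V)$ sits inside the linear space $L$. That is essentially the Katz--Payne realization result the paper quotes, and it is not enough: the final step ``comparing multiplicities forces the inclusion to be an equality'' fails precisely because an inclusion $\init_w(V)\subseteq L\cap\Gm^N$ would indeed force equality (a full-dimensional closed subscheme of an integral scheme is the whole thing), but you have not established that scheme-theoretic inclusion, only the set-theoretic/reduced one. The possibility you must exclude is exactly the phenomenon exhibited in Section~\ref{sec:example}, where several distinct initial schemes share the same support and multiplicities along one ray and differ only in embedded non-reduced structure. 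The paper's proof spends most of its effort on this point: it invokes Katz--Payne for the reduced subscheme (a hyperplane-arrangement complement, hence normal and geometrically integral) and then needs the Hironaka-type Theorem~\ref{thm:hironaka} (via the flat degeneration over a valuation ring from Lemma~\ref{lem:degen}) to conclude that generic reducedness plus a normal integral reduction forces $\init_w(V)$ to be reduced. Your proposal has no substitute for this step.

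A secondary, also real, gap is the multiplicity propagation you flag yourself: it is not a fact that multiplicity $1$ on $\Trop(V)$ gives multiplicity $1$ on $\Trop(\pi_C(V))$. The Sturmfels--Tevelev pushforward formula reads $\delta\cdot m(v') = \sum_{v\mapsto v'} m(v)\,[\text{lattice index}]$, where $\delta$ is the degree of $V\to\pi_C(V)$; with all upstairs multiplicities $1$ the image multiplicity can still exceed $1$ if several maximal cones of the Bergman fan of $M$ map onto the same maximal cone of $B(U_{k,k+1})$ and $\delta=1$. Ruling this out requires a separate combinatorial argument about the projection $B(M)\to B(M|_C)$ (or realizability input), and without it the image could, as far as your argument shows, be an irreducible quadric whose tropicalization coincides set-theoretically with the tropical hyperplane. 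So even the reduced statement is not fully established by the proposal; and even if it were, the reducedness of $\init_w(V)$ --- the heart of the theorem --- would remain unproved.
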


\begin{cor}\label{cor:curve-mult-one}
If $V$ is a curve and all multiplicities of $\Trop(V)$ are~$1$, then
the Gr\"obner and topological stratifications on $\Trop(V)$ coincide.
\end{cor}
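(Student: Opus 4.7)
The plan is to prove Corollary~\ref{cor:curve-mult-one} by combining Propositions~\ref{prop:charac} and~\ref{prop:top-charac} with the intrinsic multiplicity formula, and analyzing $\Trop(V)$ pointwise. Since the Gr\"obner stratification is already known to refine the topological one, it suffices to show that each topological stratum lies in a single Gr\"obner stratum. I would distinguish two kinds of points $w \in \Trop(V)$: those at which the tangent fan is a single line (interior points of edges), and those at which it contains at least two non-collinear rays (vertices).

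At an interior point $w$ of an edge with primitive direction~$v$, the multiplicity at $w$ equals~$1$. Since $\init_w(V)$ is equidimensional of dimension~$1$, the intrinsic multiplicity formula
\[
\sum_{W \in \operatorname{Min}(\init_w(V))} \operatorname{length}(\mathcal O_{\init_w(V), W}) \cdot [K(W)^T : k] \;=\; 1
\]
forces $\init_w(V)$ to consist of a single reduced irreducible component $W$ which is a single orbit of its maximal preserving subtorus. Matching tropicalizations, $\Trop(W)$ must equal the tangent line $\RR v$, so this subtorus is the $1$-dimensional subtorus $T_v \subset \Gm^N$ whose tropicalization is $\RR v$, and $\init_w(V)$ is a $T_v$-coset, hence $T_v$-invariant. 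Lemma~\ref{lem:link} then gives $\init_{w+\epsilon v}(V) = \init_{\epsilon v}(\init_w(V)) = \init_w(V)$ for all small~$\epsilon$, so $w' \mapsto \init_{w'}(V)$ is locally constant along the edge, and by connectedness it is constant on the entire edge interior. This interior is therefore a single Gr\"obner stratum, coinciding with its topological stratum.

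For a vertex $w$, I would argue directly that the topological stratum at $w$ is $\{w\}$, after which the refinement forces the Gr\"obner stratum to equal $\{w\}$ as well. By Proposition~\ref{prop:top-charac}, this reduces to showing that no nontrivial subtorus preserves the reduced subscheme of $\init_w(V)$. If some $1$-dimensional subtorus $T_v$ did, then the tangent fan at $w$---which is the tropicalization of this reduced subscheme---would be invariant under translation by~$v$. But for any ray direction $v' \not\in \RR v$, the affine line $v' + \RR v$ meets each ray of the fan in at most one point, so the infinite sequence $\{v' + n v : n \in \ZZ\}$ cannot lie in the fan, a contradiction.

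I expect the main obstacle to be the first step, namely converting the numerical condition ``multiplicity~$1$'' at an edge-interior point into the sharp structural statement that $\init_w(V)$ is a reduced coset of~$T_v$. This rests on the intrinsic multiplicity formula together with the fact that flat degeneration preserves equidimensionality, so that only $1$-dimensional components contribute to the sum. Once that is in hand, the rest of the proof is a local-constancy argument via Lemma~\ref{lem:link} and an elementary combinatorial observation about translation invariance of $1$-dimensional fans.
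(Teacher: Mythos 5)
Your overall architecture (split $\Trop(V)$ into edge interiors and vertices, handle vertices by the refinement property, handle edge interiors by showing $\init_w(V)$ is a reduced torus coset and then applying Lemma~\ref{lem:link}) is a reasonable alternative to the paper's one-line reduction, which simply observes that a one-dimensional topological stratum with multiplicity~$1$ is locally matroidal and invokes Theorem~\ref{thm:matroidal}. Your vertex case is fine. But the edge case has a genuine gap exactly where you predict the ``main obstacle'' to be, and the gap is not closed by the argument you sketch.

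The intrinsic multiplicity formula is a sum over $\operatorname{Min}(\init_w(V))$, the \emph{minimal} primes. Multiplicity~$1$ therefore tells you only that $\init_w(V)$ has a single irreducible component $W$, that it is \emph{generically} reduced along $W$, and that $W$ is a coset of $T_v$. It says nothing about embedded associated primes, and ``flat degeneration preserves equidimensionality'' does not exclude them: flat limits of integral curves can acquire embedded points (the limit of a twisted cubic degenerating to a nodal plane cubic is the standard example), and equidimensionality is a statement about components, not about the full set of associated primes. This matters because Proposition~\ref{prop:charac} concerns the subtorus preserving $\init_w(V)$ as a scheme: if $\init_w(V)$ were $W$ together with an embedded point, it could not be $T_v$-invariant (a finite set of associated primes cannot contain the infinite $T_v$-orbit of a closed point), so the Gr\"obner stratum at $w$ would collapse to $\{w\}$ and your conclusion would fail at that point. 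So the step ``generically reduced single coset $\Rightarrow$ $\init_w(V)$ is a $T_v$-coset'' is precisely the hard content of the paper: it is Theorem~\ref{thm:hironaka} (Hironaka's lemma, extended from excellent DVRs to general rank-one valuation rings by Noetherian approximation), applied using the fact that the reduced subscheme --- a coset of $\Gm$ --- is normal and geometrically integral. You need to invoke that theorem (or Theorem~\ref{thm:matroidal}, which packages it) rather than deduce reducedness from the multiplicity formula alone.
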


\begin{proof}
Since the Gr\"obner stratification refines the topological stratification, it
suffices to prove that each one-dimensional topological stratum of
$\Trop(V)$ is
also a Gr\"obner stratum. However, any one-dimensional topological stratum is
locally linear by definition, and thus matroidal, so the claim follows from
Theorem~\ref{thm:matroidal}.
\end{proof}

Recall that a  subvariety~$V$ of a torus is called \emph{sch\"on} if the initial
degeneration $\init_w(V)$ is smooth for every $w \in \RR^N$

\begin{cor}\label{cor:schoen}
If $\Trop(V)$ is locally matroidal at all points $w \in \Trop(V)$, then $V$ is
sch\"on.
\end{cor}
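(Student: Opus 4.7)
The plan is to derive the corollary as a direct consequence of Theorem~\ref{thm:matroidal}. By definition, $V$ is sch\"on precisely when $\init_w(V)$ is smooth for every $w \in \RR^N$, so I would verify this pointwise, splitting the argument into the cases $w \notin \Trop(V)$ and $w \in \Trop(V)$.

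For $w \notin \Trop(V)$, the initial degeneration $\init_w(V)$ is the empty scheme by the definition of the tropicalization, hence vacuously smooth, and this case requires no work.

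For $w \in \Trop(V)$, the hypothesis that $\Trop(V)$ is locally matroidal at $w$ puts us in the setting of Theorem~\ref{thm:matroidal}, which asserts that in appropriate coordinates $\init_w(V)$ is a linear subspace of $\Gm^N$. Such a linear subspace is the intersection of the open subscheme $\Gm^N \subset \mathbb{A}^N$ with a linear subvariety of $\mathbb{A}^N$, and is therefore smooth. The change of coordinates permitted by Theorem~\ref{thm:matroidal} comes from an element of $\operatorname{GL}_N(\ZZ)$ acting as an automorphism of $\Gm^N$, and smoothness is preserved under such automorphisms. Combining the two cases yields that $\init_w(V)$ is smooth for every $w \in \RR^N$, which is exactly sch\"onness.

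Since the statement is essentially an immediate consequence of Theorem~\ref{thm:matroidal}, together with the trivial observation that linear subvarieties of the torus are smooth and that $\operatorname{GL}_N(\ZZ)$-changes of coordinates preserve smoothness, I do not foresee any real obstacle here; all of the substantive work is already carried by the theorem.
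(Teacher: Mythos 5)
Your argument is correct and is exactly the intended deduction: the paper gives no separate proof of this corollary, treating it as an immediate consequence of Theorem~\ref{thm:matroidal}, since a linear subspace of $\Gm^N$ is smooth (and $\init_w(V)$ is empty off $\Trop(V)$). Your handling of the two cases and of the $\operatorname{GL}_N(\ZZ)$ coordinate change matches the paper's implicit reasoning.
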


One ingredient in the proof of Theorem~\ref{thm:matroidal} is the fact that
linear spaces are characterized by their tropicalizations, which was proved by
Katz and Payne~\cite[Prop.~4.2]{kp}.  Given this characterization, the
remaining difficulty is to prove that the $\init_w(V)$ is reduced, for which we
use the following.

\begin{thm}\label{thm:hironaka}
If $\init_w(V)$ is generically reduced and its reduced induced subscheme is
geometrically integral and normal, then $\init_w(V)$ is reduced.
\end{thm}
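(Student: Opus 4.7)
The strategy is to apply Serre's criterion for reducedness: a Noetherian scheme is reduced if and only if it satisfies both $(R_0)$ and $(S_1)$. Generic reducedness is exactly $(R_0)$, so the task is to show that $\init_w(V)$ has no embedded associated primes, i.e.\ that it satisfies $(S_1)$.

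First, using Lemma~\ref{lem:base-change} together with the assumption that $W := (\init_w(V))_{\mathrm{red}}$ is geometrically integral, I would reduce to the case of an algebraically closed residue field; then $W$ remains integral and normal, and in particular satisfies $(S_2)$. Second, I would invoke Lemma~\ref{lem:degen} to realize $\init_w(V)$ as the special fiber of a flat family $\widetilde V^w$ over a valuation ring $\widetilde R$. After replacing $V$ by its reduction if necessary, the generic fiber of this family is reduced, and since $\widetilde V^w$ is the closure of the reduced generic fiber in the flat tilted torus, $\widetilde V^w$ is itself reduced.

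The heart of the argument, and the main obstacle, is to propagate the normality of $W$ upward through the flat family in order to deduce $(S_1)$ on the special fiber. Concretely, I would aim to show that $\widetilde V^w$ is $(S_2)$ in a neighborhood of its special fiber, combining (a) generic reducedness of the special fiber together with flatness over $\widetilde R$ to control codimension-$0$ behavior, and (b) the $(S_2)$ property of $W$ combined with a Hartogs-type extension argument to control points of codimension~$\geq 1$. The delicate point is that we only know the \emph{reduced} structure of the special fiber is normal, so the transfer must be set up carefully, for example by studying depth at codimension-$1$ points of $W$ inside $\widetilde V^w$, where normality of $W$ forces the local ring to behave as predicted by a regular system of parameters.

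Once $(S_2)$ of the total space is in hand, the standard transfer of depth conditions for flat morphisms (the fibers of a flat $(S_n)$ morphism over an $(S_n)$ base are $(S_n)$, and the converse controls fiber depth by the uniformizer) forces the special fiber to satisfy $(S_1)$. Combined with the given $(R_0)$, Serre's criterion then yields reducedness of $\init_w(V)$, completing the argument.
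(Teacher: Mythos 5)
Your overall framing matches the paper's: reducedness will follow from generic reducedness ($R_0$) once one shows the special fiber of the flat family from Lemma~\ref{lem:degen} has no embedded primes, and the normality of the reduced special fiber is what must be exploited. But the step you yourself identify as ``the heart of the argument''---propagating normality of $W$ upward to an $(S_2)$ statement about the total space $\widetilde V^w$---is left as an intention (``I would aim to show\dots'') rather than an argument, and this is precisely where the content of the theorem lies. The paper's mechanism is concrete and different in flavor from your Hartogs-type sketch: writing $S$ for the coordinate ring of the family and $\p$ for the unique minimal prime of $S/\m S$, generic reducedness forces $S_\p$ to be a DVR, so the normalization $\widetilde S$ of $S$ sits inside $S_\p$; normality of $S/\p$ then forces $\widetilde S/\p\widetilde S = S/\p S$, and Nakayama's lemma (applied to the finite module $\widetilde S/S$) gives $\widetilde S_\q = S_\q$ for every prime $\q \supset \p$. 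Hence $S_\q$ is normal and the principal ideal $(\pi)$ has no embedded primes there. Note that this argument needs the normalization to be a \emph{finite} $S$-module, i.e.\ it needs $S$ to be excellent---a hypothesis you never secure.

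This points to the second, related gap: the valuation ring $\widetilde R$ produced by Lemma~\ref{lem:degen} is in general neither Noetherian nor a DVR, so your appeals to Serre's criterion, associated primes, depth transfer along flat maps, and ``the special fiber is cut out by a uniformizer'' do not apply as stated. (Your depth-transfer step at the end is only valid when the closed fiber is a Cartier divisor, i.e.\ over a DVR.) The paper spends roughly half the proof dealing with exactly this: by Noetherian approximation \cite[Cor.~11.2.7]{ega4-3} it descends the family to a finite type $\ZZ$-algebra $R_0$, localizes, and dominates the result by an excellent DVR $R_2$ \cite[Prop.~7.1.7]{ega2}; it then runs the normalization argument over $R_2$ and transfers the absence of embedded primes back to $S/\m S$ using \cite[Prop.~4.2.7(i)]{ega4-2} (this is also where geometric integrality, rather than mere integrality, of the reduced fiber is used, since the residue field changes under the approximation). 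Without some such reduction, or an independent justification that the relevant commutative algebra survives over a non-Noetherian valuation ring, your plan does not close.
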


Theorem~\ref{thm:hironaka} is a generalization of a result first obtained by
Hironaka for degenerations over the localization of a ring of finite type over a
field~\cite[Lemma~4]{hironaka} (see also \cite[Lemma~III.9.12]{hartshorne}).
We use the Hironaka's argument for the case when $R$ is an excellent DVR,
and use Noetherian approximation to reduce to this case when $R$ is a general
rank~$1$ valuation.

\begin{proof}[Proof of Theorem~\ref{thm:hironaka}]
Using Lemma~\ref{lem:degen}, we let $S$ denote the coordinate ring of the flat
family over the valuation ring, for which we drop the tilde and write~$R$. We
let $\p$ denote the unique minimal prime of $S / \m S$.

We first prove the theorem under the assumption that $R$ is an excellent
discrete valuation ring and write $\pi$ to denote a uniformizer of~$R$. We've
assumed
that $S/\m S$ is generically reduced, so
$\pi$ generates the maximal ideal in $S_\p$, meaning that $S_\p$ is a
discrete valuation ring. Therefore, $\widetilde S$, the normalization of~$S$ is a
subring of $S_\p$, and so $\widetilde S / \p \widetilde S$ is a subring $S_\p/ \p S_\p$.
However, $S_\p / \p S_\p$ is the field of fractions of $S/\p$, which we've
assumed to be normal, so $\widetilde S / \p \widetilde S$ must equal $S / \p S$.

Since $S/ \m S$ is generically reduced, it will be sufficient to
show that it has no embedded primes. Suppose that $\q$ is any prime containing
$\p$ and we will show that $\q$ is not an embedded prime of~$S / \m S$.
Since $R$ is excellent,
then, in particular, the normalization $\widetilde S$ is a finite $S$-module.
We've shown that $\widetilde S / \q \widetilde S$ equals $S / \q S$, so
Nakayama's Lemma implies that $\widetilde S_\q$ equals $S_\q$.
In other words, $S_\q$ is normal.
Thus, the principal ideal in~$S_\q$ generated by $\pi$ cannot have any embedded
primes, and thus $\q$ cannot be an embedded prime of $S/\m S$. This completes the proof when $R$ is an excellent DVR.

Now, we allow $R$ to be an arbitrary valuation ring and we reduce to the
excellent case by Noetherian approximation. By~\cite[Cor.~11.2.7]{ega4-3}, there
exists a finite type $\ZZ$-algebra $R_0$, contained in $R$, and a finite type,
flat $R_0$-algebra $S_0$ such that $R_0 \subset R$ and $S \isom R \otimes_{R_0}
S_0$ as $R$-algebras.  Set $R_1$ to be the localization $(R_0)_{\m \cap R_0}$
and let $\m_1$ be its maximal ideal.  By \cite[Prop.~7.1.7]{ega2},
there exists a discrete valuation ring $(R_2, \m_2)$ dominating $R_1$ with the
same field of fractions. From the proof of that proposition, we see that $R_2$
is the normalization of a localization of a ring of finite type over $R_0$, so
$R_2$ is excellent~\cite[Schol.~7.8.3(iii)]{ega4-2}.  These various bases are
summarized in the following commutative diagram:
\begin{equation}\label{eq:cd}
\begin{CD}
S_0 \otimes_{R_0} R_2 @<<< S_0 \otimes_{R_0} R_1 @>>> S = S_0 \otimes_{R_0}
R\hspace{-0.8in} \\
@AAA @AAA @AAA \\
R_2 @<<< R_1 @>>> R
\end{CD}\hspace{0.8in}\end{equation}
We will write $S_0 \otimes_{R_0} R_i$ as $S_i$ for $i=1, 2$.

The rings in the bottom row of~(\ref{eq:cd}) are local rings and we now wish to
the corresponding special fibers of the rings in the top row.  The inclusions of
$R_1$ into $R$ and~$R_2$ are local homomorphisms, so if we take the quotients of
the rings in (\ref{eq:cd}) by the maximal ideals in the bottom row, we get a
commutative diagram whose squares are tensor products:
\begin{equation*}\begin{CD}
S_2/ \m_2 S_2 @<<< S_1 / \m_1 S_1 @>>> S / \m S \\
@AAA @AAA @AAA \\
R_2 / \m_2 @<<< R_1 / \m_1 @>>> R/ \m
\end{CD}\end{equation*}
By hypothesis, $S/ \m S$ has a unique minimal prime.
Thus, the only minimal prime of $S_1 / \m_1 S_1$ is
the contraction $\p \cap (S_1 / \m_1 S_1)$~\cite[Prop.~4.2.7(i)]{ega4-2}, and we
will call this prime~$\p_1$.
Moreover, localizing at this prime kills off any embedded components in $S
/ \m S$, so the localization of $S_1 / \m_1 S_1$ at $\p_1$ is a subring of an
integral domain and thus $S_1/ \p_1$ is generically reduced. Since $S / \p$ is
geometrically integral, then
$S_2 / \p_1 S_2$ is integral and $\p_1 S_2$ is the unique minimal prime over
$\m_2 S_2$. Therefore, we can apply the proof of the excellent case to conclude
that $\m_2 S_2$ is radical. Since $S_2 / \m_2 S_2$ has no embedded primes, then
neither do $S_1 / \m_1 S_1$ and $S / \m S$~\cite[Prop.~4.2.7(i)]{ega4-2},
which is what we wanted to prove.
\end{proof}

\begin{proof}[Proof of Theorem~\ref{thm:matroidal}]
The degeneration $\init_w(V)$ and its reduced subscheme
have the same tropicalizations as sets.
In general, the tropicalization of the reduced subscheme could have lower
multiplicities, but 
the fact that all the multiplicities are~$1$ means that
$\init_w(V)$ and its reduced subscheme have the same multiplicities as well.
Therefore, the reduced subscheme of $\init_w(V)$ is isomorphic to the complement
of a hyperplane arrangement in projective space~\cite[Prop.~4.2]{kp}, and in
particular it is normal. Thus,
Theorem~\ref{thm:hironaka} implies that $\init_w(V)$ is reduced.
The initial degenerations of a linear space are also linear spaces, so by
Lemma~\ref{lem:link}, the initial subschemes are also reduced sufficiently close
to~$w$.
Therefore, the criteria in Propositions~\ref{prop:charac}
and~\ref{prop:top-charac} agree, so the Gr\"obner and topological
stratifications agree in this neighborhood.
\end{proof}


\begin{thebibliography}{BPR11}

\bibitem[AK06]{ak}
F.~Ardila, C.~Klivans:
The Bergman complex of a matroid and phylogenetic trees,
\emph{J. Combin. Theory Ser. B} \textbf{96}:1 (2006) 38--49.

\bibitem[BPR11]{bpr}
M.~Baker, S.~Payne, J.~Rabinoff:
Nonarchimedean geometry, tropicalization, and metrics on curves,
preprint, \href{http://arxiv.org/abs/1104.0320}{arXiv:1104.0320}.

\bibitem[Gro61]{ega2}
A.~Grothendieck: \emph{\'El\'ements de g\'eometr\'etrie
alg\'ebrique:
II. \'Etude globale \'el\'ementaire de quelques classes de morphismes},
Publ. Math. de l'IH\'ES \textbf{8} (1961).

\bibitem[Gro65]{ega4-2}
A.~Grothendieck: \emph{\'El\'ements de g\'eometr\'etrie
alg\'ebrique:
IV. \'Etude locale des sch\'emas et des morphismes de sch\'emas, Seconde
partie},
Publ. Math. de l'IH\'ES \textbf{24} (1965).

\bibitem[Gro66]{ega4-3}
A.~Grothendieck: \emph{\'El\'ements de g\'eometr\'etrie
alg\'ebrique:
IV. \'Etude locale des sch\'emas et des morphismes de sch\'emas,
Troisi\`eme partie},
Publ. Math. de l'IH\'ES \textbf{28} (1966).

\bibitem[Gub11]{gubler}
W.~Gubler: A guide to tropicalizations,
preprint, \arxiv{1108.6126}.

\bibitem[Har77]{hartshorne}
R.~Hartshorne: \emph{Algebraic Geometry}, Graduate Texts in Mathematics
\textbf{52}, Springer, New York,
1977.

\bibitem[Hir58]{hironaka}
H.~Hironaka: A note on algebraic geometry over ground rings: the
invariance of Hilbert characteristic functions under the specialization
process,
\emph{Illinois J.  Math.}
\textbf{2}:3 (1958) 355-366.

\bibitem[KP11]{kp}
E.~Katz, S.~Payne: Realization spaces for tropical fans,
\emph{Combinatorial aspects of commutative algebra and algebraic geometry},
Abel Symp. \textbf{6} (2011) 73--88.

\bibitem[KS10]{ks}
E.~Katz, A.~Stapledon:
Tropical geometry and the motivic nearby fiber,
\emph{Compositio Math.}, to appear.
\arxiv{1007.0511}

\bibitem[LQ11]{lq}
M.~Luxton, Z.~Qu:
Some results on tropical compactifications,
\emph{Trans.  Amer. Math.  Soc.} \textbf{363}:9 (2011) 4853--4876.

\bibitem[OP10]{op}
B.~Osserman, S.~Payne: Lifting tropical intersections, preprint,
\arxiv{1007.1314}.

\bibitem[Qu09]{qu}
Z.~Qu: \emph{Toric schemes over a discrete valuation ring and tropical
compactifications}. PhD thesis, University of Texas at Austin (2009).


\bibitem[ST08]{st}
B.~Sturmfels, J.~Tevelev: Elimination theory for tropical varieties,
\emph{Mathematical Research Letters} \textbf{15}:3 (2008) 534--562.

\bibitem[Sha10]{shaw}
K.M.~Shaw: A tropical intersection product in matroidal fans,
preprint, \arxiv{1010.3967}.

\bibitem[Tev07]{tevelev}
J.~Tevelev: Compactifications of subvarieties of tori,
\emph{Am. J. Math.}
\textbf{129}: 4 (2007) 1087--1104.

\end{thebibliography}
\end{document}